\definecolor{ItalianApricot}{rgb}{1,0.7,0.5}
\theoremstyle{plain}
\newtheorem{thm}{Theorem}[section]
\newtheorem{lem}[thm]{Lemma}
\theoremstyle{definition}
\newtheorem{defn}[thm]{Definition}
\theoremstyle{remark}
\newtheorem{remark}[thm]{Remark}
\numberwithin{equation}{section}
\renewcommand{\epsilon}{\varepsilon}
\renewcommand{\phi}{\varphi}
\begin{document}

\title[Indifference for genericity: Union and subsequence sets]{Notions of indifference for genericity:\\ Union sets and subsequence sets}

\date{\today}

\author{Tejas Bhojraj}
\address[Tejas Bhojraj]{Department of Mathematics, University of Wisconsin--Madison, 480 Lincoln Dr., Madison, WI 53706, USA}
\email{bhojraj@math.wisc.edu}


\maketitle

\begin{abstract}
A set $I$ is said to be \emph{a universal indifferent set for $1$-genericity} if for every \emph{$1$-generic} $G$ and for all $X \subseteq I$, $G \Delta X$ is also $1$-generic. Miller \cite{day2013indifferent} showed that there is no infinite universal indifferent set for $1$-genericity. We introduce two variants (union and subsequence sets for $1$-genericity) of the notion of universal indifference and prove that there are no non-trivial universal sets for $1$-genericity with respect to these notions. In contrast, we show that there \emph{is} a non-computable subsequence set for weak-$1$-genericity. 
\end{abstract}

\section{Overview}
The $1$-generic and weak-$1$-generic sets are important classes of sets in computability theory\cite{downey2010algorithmic}. A set $I$ is said to be \emph{a universal indifferent set for $1$-genericity} if for every $1$-generic $G$ and for all $ X \subseteq I$, the symmetric difference of $G$ and $X$ ($G \Delta X$) is also $1$-generic \cite{day2013indifferent}. We can similarly define a \emph{universal indifferent set for weak-$1$-genericity}. Figueira, Miller and Nies coined the term `indifferent' and examined indifferent sets for random sequences\cite{figueira2009indifferent}. Indifferent sets for $1$-genericity were examined by Day\cite{day2013indifferent}. See \cite{day2013indifferent} for a comprehensive survey of the literature investigating indifferent sets for various classes of sets. It is easy to check that any finite set is a universal indifferent set for $1$-genericity. Miller showed that there is no infinite universal indifferent set for $1$-genericity while Day showed that there \emph{is} a non-computable universal indifferent set for weak-$1$-genericity\cite{day2013indifferent}. A natural question arises: Is there a notion of indifference for $1$-genericity for which there \emph{is} a non-trivial example?

We consider two possibilities: (1) a \emph{union set for $1$-genericity} and (2) a \emph{subsequence set for $1$-genericity}. Union sets and subsequence sets for weak-$1$-genericity are defined similarly. While being a union set for $1$-genericity (weak-$1$-genericity) is a weaker notion than being a universal indifferent set for $1$-genericity (weak-$1$-genericity), being a subsequence set for $1$-genericity (weak-$1$-genericity) is not obviously comparable to being a universal indifferent set for $1$-genericity (weak-$1$-genericity). It is easy to see that any finite set is a union set and that any computable set is a subsequence set for 1-genericity (weak-$1$-genericity). 

We show: (1) that there is no infinite union set for $1$-genericity, (2) that there is no infinite subsequence set for $1$-genericity and (3) that there \emph{is} a non-computable subsequence set for weak-$1$-genericity. It is implicit in the work of Day that there is a non-computable union set for weak-$1$-genericity. Previously, Kuyper and Miller \cite{JR} have attempted to weaken the notion of universal indifference and to see if there is a non-computable universal set for $1$-genericity with respect to the weaker notion. They introduced the notion of \emph{$1$-generic stabilising sets}, using symmetric difference in place of union, and proved that there is no non-computable $1$-generic stabilising set \cite{JR}. In their proof, they used the Jockusch\textendash Shore modification of the Posner\textendash Robinson theorem \cite{jockusch1983pseudojump}, a technique which also inspired the proofs in this paper. 

\section {Background and notation}
We assume some familiarity with basic computability theory and with the definitions of $1$-generic, weak-$1$-generic, immune, and hyper-immune, all of which may be found in \cite{downey2010algorithmic} and \cite{odifreddi1999classical}. In what follows, we will use two results of Jockusch on numerous occasions: \\(1) If $X \subset \omega$ is hyperimmune, then there is a $1$-generic $G\supseteq X$.\\(2) A $1$-generic set cannot compute the halting problem ($\emptyset^{'}$). \\The result (1) appears as Proposition 4.7 in the paper by Hirschfeldt, Jockusch, Kjos-Hanssen, Lempp, and Slaman \cite{hirschfeldt2008strength} while (2) appears in \cite{odifreddi1999classical} as Proposition XI.2.3.

For any $S \subseteq \omega$, we abuse notation and denote the characteristic function of $S$ by $S$. I.e., for all $ n \in \omega$, $ S(n)=1 \iff n\in S$. This allows us to identify subsets of $\omega$ with elements of $2^{\omega}$. For any $S \subseteq \omega $, $ \overline{S}$ is the complement of $S$. For $\tau$ and $\sigma \in 2^{<\omega}$ and $A \in 2^{\omega}$,  $\tau \succeq \sigma$ and $A \succeq \sigma$ will denote that $\sigma$ is an initial segment of $\tau$ and \emph{A} respectively. Similarly, $\succ$ will denote ``strict initial segment''.

\section{Union Sets}

\begin{defn}
A set $X$ is said to be a \emph{union set for $1$-genericity (weak-$1$-genericity)} if for all $1$-generic (weak-$1$-generic) sets $G$, $G\cup X$ is also $1$-generic (weak-$1$-generic).
\end{defn}

Clearly, if $X$ is a universal indifferent set for $1$-genericity (weak-$1$-genericity), then it is also a union set for $1$-genericity (weak-$1$-genericity). Hence, by a result of Day \cite{day2013indifferent}, there is a non-computable union set for weak-$1$-genericity. We now show that there is no infinite union set for $1$-genericity. It is easy to see that any finite set is a union set for $1$-genericity.

\begin{thm}\label{thm:three}
If $X$ is infinite, there is a $1$-generic $A$ such that $A \cup X$ is not $1$-generic. I.e., there is no infinite union set for $1$-genericity.
\end{thm}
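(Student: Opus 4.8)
The plan is to show, for each infinite $X$, how to choose a $1$-generic $A$ so that $B := A \cup X$ fails to be $1$-generic. Note first that $B \supseteq X$, so $B$ carries a $1$ at every element of $X$; the only freedom we have in $B$ is on $\overline{X}$, where $B$ agrees with $A$. I will exploit two ways in which a set can fail $1$-genericity: it can avoid a dense c.e.\ set of strings (failing even weak-$1$-genericity, hence failing $1$-genericity), or it can compute $\emptyset'$ (impossible for a $1$-generic by result (2)). The argument splits according to the combinatorial shape of $X$.

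\smallskip

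First, the easy cases, where a single fixed dense c.e.\ set does the job and \emph{any} $1$-generic $A$ works. If $X$ is not immune, then $X$ has an infinite computable subset $S$, and since $B \supseteq S$ the set $W_S = \{\sigma \in 2^{<\omega} : (\exists s \in S)(s < |\sigma| \wedge \sigma(s) = 0)\}$ is a dense c.e.\ set that $B$ avoids (every initial segment of $B$ is $1$ on $S$), so $B$ is not weakly-$1$-generic. If instead $X$ is syndetic, i.e.\ its gaps are bounded by some $g$, then $B \supseteq X$ has all maximal runs of $0$'s of length $< g$; since a $1$-generic must meet the dense c.e.\ set of strings ending in $g$ zeros, $B$ is not $1$-generic. (One may also dispatch the case that $\overline{X}$ is hyperimmune directly from result (1): it yields a $1$-generic $A \supseteq \overline{X}$, whence $A \cup X = \omega$ is computable and so not $1$-generic.)

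\smallskip

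The remaining, and genuinely harder, case is that $X$ is immune and has arbitrarily long gaps, i.e.\ $\overline{X}$ contains arbitrarily long runs of consecutive integers. Here no fixed dense c.e.\ set is available, since $X$ has no infinite computable subset, so I would instead build a $1$-generic $A$ by a Jockusch\textendash Shore style finite-extension forcing (using $X \oplus \emptyset'$ as an oracle in the construction) arranging that $B = A \cup X \ge_T \emptyset'$, so that $B$ is not $1$-generic by result (2). The coding uses the long runs of $\overline{X}$ as \emph{coding regions}: on such a region $R \subseteq \overline{X}$ one has $B \uh R = A \uh R$, so we may write an arbitrary pattern into $B$ there, and we encode successive bits of $\emptyset'$ separated by recognizable markers. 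The essential device is \emph{masking}: the genericity requirements force $A$ to contain long runs of $0$'s, but whenever we create such a run we first locate, using the $X$-oracle, the next element $x$ of $X$ and arrange the run to cover $x$; since $B(x) = 1$, this run is broken in $B$ while surviving in $A$. Consequently the long $0$-runs of $B$ are exactly the markers in the coding regions, so a single functional reads $\emptyset'$ off $B$; but in $A$ the genuine markers are indistinguishable from the masked decoy runs, so $A$ does not compute $\emptyset'$, consistent with $A$ being $1$-generic.

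\smallskip

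I expect the main obstacle to be reconciling this coding with $1$-genericity, and two points need care. First, when a requirement is met by extending into an arbitrary dense c.e.\ set $W_e$, the witnessing string may itself drop a long run of $0$'s onto a run of $\overline{X}$ and thereby plant a spurious marker in $B$; this must be prevented, e.g.\ by making genuine markers self-delimiting or by reserving a fresh, sufficiently large marker-length at each coding step, so that earlier finite noise cannot be mistaken for a marker. Second, one must check that the coding and masking constraints are always placed on fresh regions beyond the current use and are sparse enough never to block the forcing that secures genericity. It is precisely the immunity of $X$ that renders the coding regions invisible to $A$ alone; indeed for computable $X$ the scheme provably cannot work, since then $B \le_T A$ and a $1$-generic $A$ cannot compute $\emptyset'$, which is exactly why the combinatorial cases above must be separated out.
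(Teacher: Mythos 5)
Your preliminary cases are correct and together exhaustive: every infinite $X$ is non-immune, or syndetic, or immune with arbitrarily long gaps. The non-immune case and the $\overline{X}$-hyperimmune parenthetical match the paper's reductions (your version of the latter, giving $A\cup X=\omega$ directly, is even slightly cleaner, as it avoids the paper's separate treatment of $1$-generic $X$), and the syndetic case is a small genuine simplification: the paper has no such case and must push syndetic immune sets (which exist, e.g.\ $\{2n:n\in Z\}\cup\{2n+1:n\notin Z\}$ for bi-immune $Z$) through its main construction, whereas you dispatch them with one dense computable set of strings.

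The gap is in the remaining case, which is where the theorem actually lives. Your plan---code $\emptyset'$ into long runs of $\overline{X}$ and read it off $B=A\cup X$---omits the one mechanism such an argument needs: a way for the \emph{decoder}, computing from $B$ alone, to track the construction. Two intertwined problems, which you flag but whose proposed fixes fail. First, before the decoder can locate the stage-$s$ coding region it must know how long the stage-$s$ genericity extension was, which requires knowing whether the construction met or avoided $W_s$; that is itself a $\emptyset'$-question, and you give $B$ no way to answer it. Second, your defenses against spurious markers do not work: masking cannot be applied to $0$-runs lying \emph{inside} a genericity witness $\tau\in W_s$ (altering bits of $\tau$ may throw it out of $W_s$, padding relocates it only coarsely, and an immune $X$ with long gaps is far too sparse to cover arbitrary runs), while a ``fresh, sufficiently large marker-length at each coding step'' is useless because the decoder cannot compute it---it depends on the unknown witness length, which is the same circularity again. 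The paper resolves exactly this point by exploiting the hypothesis you discarded: in the remaining case one may assume $\overline{X}$ is \emph{not} hyperimmune, giving a computable $f$ with $\overline{X}\cap(f(k),f(k+1))\neq\emptyset$ for every $k$. Markers are then \emph{full blocks of $1$'s}, which $X$ alone can never produce and whose candidate positions are computable; the met/avoided outcome is broadcast by the single bit $B(n)=X(n)$ at the marker index $n$, which the construction places in $V\setminus X$ (possible by immunity, since $V$ is infinite c.e.\ in the met case) or in $X\setminus V$ (possible since $X$ is infinite in the avoided case); and once the decoder knows the outcome it recovers $\tau$ by re-running the construction's own canonical search of $W_s$, so noise inside witnesses never has to be recognized at all. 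Without some device of this kind, the hard case---and hence the theorem---remains unproved.
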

\begin{proof}
Fix an infinite set $X$. If $X$ is $1$-generic, then it is easy to see that $\overline{X}$ is $1$-generic too. Since $\omega$ is not $1$-generic, we may take $A$ to be $\overline{X}$. So, let $X$ be non-$1$-generic. If $\overline{X}$ is hyper-immune, then there is a $1$-generic $G\supseteq \overline{X}$. So, $\overline{G}\subseteq {X}$. Then $A=\overline{G}$ is $1$-generic and $A\cup X = X$ is not $1$-generic, as desired. So, we may assume that $X$ is not co-hyperimmune. Lastly, since it is easy to see that a $1$-generic cannot contain an infinite c.e.\ set, we may assume that $X$ is immune. (If $X$ contains an infinite c.e.\ set, then for any $G$,  $G \cup X$ is not $1$-generic as it contains an infinite c.e.\ set.)\\
\emph{Construction}: Since $X$ is not co-hyperimmune, let \emph{f} be computable and strictly increasing so that for every \emph{n}, there exists an $m \in (f(n),f(n+1))$ with $X(m) = 0$. We build $A$ by initial segments. Suppose that at stage $s+1$, $\sigma_{s}$ has been built and $|\sigma_{s}|=f(m)$ for some \emph{m}. Consider the c.e.\ set:
\[V= \{n>f(m):\; \exists \tau \succeq \sigma_{s}0^{f(n)-f(m)}1^{f(n+1)-f(n)},\;\;\tau \in W_{s}\}.\]
Case 1. $|V|= \infty$. Pick an $n\in V\backslash X$ and the least $\tau$ in a canonical enumeration of $W_s$ witnessing $n \in V$. Let $x= \mu z[ f(z)>|\tau|]$. 
\\If $s\notin \emptyset^{'}$, let $\sigma_{s+1}=\tau 1^{f(x)-|\tau|}0^{f(x+1)-f(x)}$.
If $s \in \emptyset^{'}$, let $\sigma_{s+1}=\tau 1^{f(x+1)-|\tau|} .$
\\Case 2. $|V|<\infty$. Pick an $n\in X\backslash V, n>f(m)$. \\If $s\notin \emptyset^{'}$, let $\sigma_{s+1}= \sigma_{s}0^{f(n)-f(m)}1^{f(n+1)-f(n)}0^{f(n+2)-f(n+1)}$.\\ If $s\in \emptyset^{'}$, let $\sigma_{s+1}= \sigma_{s}0^{f(n)-f(m)}1^{f(n+2)-f(n)}$.\\ This completes the construction.
Note: In both cases, we have $n>f(m)$ and $\sigma_{s}0^{f(n)-f(m)}\prec A$. So, $ |\sigma_{s}|= f(m)<n<f(n)= |\sigma_{s}0^{f(n)-f(m)}|$ ensures that $n\notin A.$
\item
\emph{Verification}: As a $1$-generic set cannot compute $\emptyset^{'}$, it suffices to prove the following claims. \\
\emph{Claim}: The set $A$ is $1$-generic. \\\emph{Proof}: Take stage $s+1$. Since $X$ is immune, the appropriate \emph{n} as in the construction can be found in both cases. In case (1), there exists a $ \tau \prec A$ such that $\tau \in W_{s}$. In case (2), $n\notin V$ and $n>f(m)$. So, for every $\tau \succeq \sigma_{s}0^{f(n)-f(m)}1^{f(n+1)-f(n)}, \tau \notin W_{s}$. We ensure that $\sigma_{s}0^{f(n)-f(m)}1^{f(n+1)-f(n)} \prec A$. \qed\\
\emph{Claim}: $A\cup X$ computes $\emptyset^{'}$.\\
\emph{Proof}:
We describe the algorithm used by $B=A\cup X$ to find $\sigma_{s+1}$ from $\sigma_{s}$. First, find \emph{m} from $|\sigma_{s}|$. To find the \emph{n} such that $\sigma_{s}0^{f(n)-f(m)}1^{f(n+1)-f(n)} \prec \sigma_{s+1}$, find the first block in $B\upharpoonright(f(m),\infty)$ of the form $1^{f(x+1)-f(x)}$ for some $x>f(m)$. More precisely, $B$ finds $x$ where \[x=\mu p> f(m)\;\;[\;\forall y\in (f(p), f(p+1)), B(y)=1].\] We see that $x=n$. Indeed, for every $y\in [f(m), f(n))$, we have that $A(y)=0$ and for every $ k\in [1,n-m]$, there exists $y \in (f(m+k-1), f(m+k))$ with $X(y)=0$. So, for every $k\in [1,n-m]$, there exists $ y \in (f(m+k-1), f(m+k))$ with $B(y)=0$. Finally, by construction, we have that for every $y\in (f(n), f(n+1)), B(y)=1$.\\So, $x=n$.\\If $n\notin B$, find (computably) the $\tau$ and $x$ as in case 1. By similar reasoning as above, if there exists a $ y\in (f(x),f(x+1))$ with $B(y)=0$, then $s\notin \emptyset^{'}$ and if for every $y\in (f(x),f(x+1))$ we have that $ B(y)=1$, then $s\in \emptyset^{'}$. By checking which case holds, $B$ can find $\sigma_{s+1}$.
If $n\in B$, then by the note, it must be that $n\in X$. So, case 2 occurred. Again, by checking whether or not there exists a $y\in (f(n+1),f(n+2))$ with $ B(y)=0$, $B$ finds $\sigma_{s+1}$ and determines if $s\in \emptyset'$. \qed
\item
Theorem ~\ref{thm:three} is proved.
\end{proof}

\section{Subsequence Sets}
\begin{defn}
If $I \subseteq \omega$ is an infinite set and $ i_{0}<i_{1}<i_{2}\dots$ is the listing of $I$ in increasing order, $P_I$ (the \emph{principal function of I}) is defined by $P_{I}(n)=i_{n}$ for all $n \in \omega$.
\end{defn}
\begin{defn}
Given $A,I \subseteq \omega$,  $A_I$ (\emph{$I$ subsequence of $A$}) is the set defined by $A_I(n)= A(P_{I}(n))$ for all $n \in \omega$.
\end{defn}
\begin{remark}
The idea is as follows: $A_I$ is the subsequence obtained by considering a bit of the $A$ sequence only if the $I$ sequence contains a one in that location. Alternatively, $A_I$ is the subsequence obtained from the $A$ sequence by deleting a bit if the $I$ sequence contains a zero in that location. Of course, we are thinking of sets as elements of $2^{\omega}$.
\end{remark}
\begin{defn}
 We say that $I\subseteq \omega $ is a \emph {subsequence set for $1$-genericity}, if for all $G\subseteq \omega$, if $G$ is $1$-generic then $G_I$ is also $1$-generic.
\end{defn}

It is easy to see that any computable set is a subsequence set for $1$-genericity.

\begin{defn}
If $G$ is $1$-generic, say that $I\subseteq \omega $ is a \emph {subsequence set for G} if $G_I$ is a $1$-generic set.
\end{defn}
\begin{remark}
It can be shown that given a countable collection of $1$-generic sets $(G_s)_{s \in \omega}$, there is a non-computable $I$ which is a subsequence set for each $G_s$. We omit the proof as this result is not relevant here.
\end{remark}  
\begin{thm}\label{thm:one}
There is no non-computable subsequence set for $1$-genericity.
\end{thm}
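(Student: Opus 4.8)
The plan is to show that if $I$ is non-computable, then we can build a $1$-generic $G$ whose subsequence $G_I$ fails to be $1$-generic—in fact, I want $G_I$ to compute $\emptyset'$, so that by Jockusch's result (2), $G_I$ cannot be $1$-generic. The strategy mirrors the proof of Theorem~\ref{thm:three}: code the halting problem into $G_I$ using a finite-injury-style construction of $G$ by initial segments, while simultaneously ensuring $G$ itself is $1$-generic by meeting all the requirements $\{\sigma : \exists \tau \succeq \sigma,\ \tau \in W_s\}$ (i.e., at each stage either forcing $G$ into some $W_s$ or forcing $G$ out of its closure). The key asymmetry I would exploit is that $I$ is non-computable, so the positions $P_I(n)$ (the locations of $G$ that actually survive into $G_I$) are not computably predictable; this is what lets the coding of $\emptyset'$ into $G_I$ genuinely fail to be computable from $G_I$ alone, forcing $G_I$ above $\emptyset'$.

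\medskip

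Concretely, I would build $G$ in blocks. At stage $s+1$, having committed $\sigma_s \prec G$, I first perform a genericity step: search for an extension $\tau \succeq \sigma_s$ with $\tau \in W_s$; if found, commit to $\tau$, otherwise commit to $\sigma_s$ followed by a stretch that can never be extended into $W_s$. Interleaved with this, on each block I plant a codeword that records one bit of $\emptyset'$—specifically, whether $s \in \emptyset'$—but I plant it on the bits of $G$ indexed by $I$, i.e., at positions $P_I(\cdot)$, rather than at positions of $\overline{I}$. The decoding procedure, run with oracle $G_I$ (equivalently, $G$ restricted to the $I$-positions, re-indexed), reads off these codewords and thereby computes $\emptyset'$ from $G_I$. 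Since a $1$-generic cannot compute $\emptyset'$, $G_I$ is not $1$-generic, while the genericity steps guarantee $G$ is.

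\medskip

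The main obstacle—and the place where the construction must be done carefully—is the interaction between the two agendas on the $I$-indexed positions versus the $\overline{I}$-indexed positions. Because $I$ is only non-computable (not necessarily immune, hyperimmune, or even possessing the combinatorial structure used in Theorem~\ref{thm:three}), I cannot assume I know which positions lie in $I$ while building $G$. The natural fix is to have $G$ use $I$ as an oracle during the construction (the theorem only asserts the existence of \emph{some} $1$-generic $G$, and $G$ is permitted to be non-computable), so that at each stage $G$ knows exactly which of the forthcoming positions are $I$-positions and can place its coding bits accordingly, while the decoder, given $G_I$, recovers those same bits because it sees precisely the $I$-positions. I expect the delicate bookkeeping to be ensuring that the genericity extensions $\tau \succeq \sigma_s$ do not destroy the coding on $I$-positions already committed and that the decoder can unambiguously parse block boundaries within $G_I$; following the pattern of Theorem~\ref{thm:three}, this is arranged by separating blocks with distinctive runs (long zero-runs or one-runs) whose images under the subsequence operation remain recognizable. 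The crux, then, is verifying that $G_I \geq_T \emptyset'$ survives the genericity steps, and that this decoding genuinely requires the $I$-information rather than being simulable computably—which is exactly where non-computability of $I$ is indispensable.
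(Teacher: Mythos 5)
Your high-level target is the right one---build a $1$-generic $G$ such that $G_I$ is not $1$-generic, witnessed where possible by $G_I \geq_T \emptyset'$---and using $I$ and $\emptyset'$ as oracles in the construction is legitimate (the paper does this too). But there are two genuine gaps. The first is the hyperimmune case. You propose a single uniform coding construction for every non-computable $I$, with the decoder parsing $G_I$ via ``distinctive runs whose images under the subsequence operation remain recognizable.'' Recognizability of such markers requires a \emph{computable} calibration between positions in $G$ and positions in $G_I$: the decoder is a fixed Turing functional whose only oracle is $G_I$, so it must know, computably, how long a run to look for and how far the image of an adversarial $\tau \in W_s$ can reach. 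That calibration is exactly what non-hyperimmunity supplies (a computable, strictly increasing $f$ with $I \cap (f(n),f(n+1)) \neq \emptyset$ for every $n$, as in Definition~\ref{defn:main}), and it is also what Theorem~\ref{thm:three}, which you model your construction on, obtains from its reduction to the non-co-hyperimmune case. For hyperimmune $I$ no computable function bounds the gaps of $I$; your construction can consult $I$ to place long markers, but the decoder cannot consult $I$ to find them. The paper sidesteps this entirely: if $I$ is hyperimmune, then by Jockusch's result (1) there is a $1$-generic $G \supseteq I$, and then $G_I = \omega$, which is not $1$-generic. Note that here $G_I$ is computable, so non-genericity is witnessed in the opposite way from the one you commit to; the paper obtains $G_I \geq_T \emptyset'$ only for non-hyperimmune, non-computable $I$ (Lemma~\ref{lem:main} and the remark following its proof).

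Second, and more fundamentally, your sketch never actually uses the non-computability of $I$. Since every computable set \emph{is} a subsequence set for $1$-genericity, a correct construction must visibly break down when $I$ is computable; yours, as described (oracle-$I$ construction planting coding bits at $I$-positions, decoder reading them off $G_I$), could be run verbatim for computable $I$, and you cannot point to the step that would fail. Your stated use of non-computability---that the positions $P_I(n)$ are ``not computably predictable,'' which somehow forces $G_I$ above $\emptyset'$---is not a mechanism: we \emph{want} the decoding to be computable from $G_I$, so unpredictability by itself buys nothing. The missing device, in the paper, is this: at each stage one searches for a location $n$ at which membership of $2n$ (or $2n+1$) in a c.e.\ set $V$ derived from $W_s$ disagrees appropriately with whether $P_I(n)$ lies in a $[0]$-block or a $[1]$-block, and termination of this search is proved by contradiction---if it never terminated, one could compute $h(n) = |I \cap (f(n),f(n+1))|$ (the wild case, Theorem~\ref{thm:two}) or $I$ itself (the tame case, Theorem~\ref{thm:sub}). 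This search is precisely what reconciles the two agendas you flag as being in tension: the configuration found lets the construction either meet $W_s$ or permanently avoid it (genericity), while simultaneously making the single bit $A_I(n)$ a truthful record of the c.e.\ fact ``$2n \in V$,'' so that the decoder recovers the adversarial $\tau$ by \emph{enumerating} $V$ rather than by parsing $\tau$'s image, and only afterwards uses markers (calibrated by $f$ and by $|\tau| = f(x)$, both known to it at that point) to locate where $\tau_I$ ends. Note also that this forces a further case split---tame versus wild, according to whether $h$ is computable---because the object threatened with computability differs in the two cases. Deferring all of this as ``delicate bookkeeping'' defers the entire content of the proof.
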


Given a hyperimmune set $H$, there is a $1$-generic $G$ containing it and hence such that $G_H = \omega$. So, a subsequence set for $1$-genericity cannot be hyperimmune. So, Lemma ~\ref{lem:main} suffices to prove Theorem ~\ref{thm:one}.

\begin{lem} \label{lem:main}
There is no non-computable, non-hyperimmune subsequence set for $1$-genericity.
\end{lem}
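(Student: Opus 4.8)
The plan is to build a $1$-generic set $G$ for which $G_I$ computes $\emptyset'$; since a $1$-generic set cannot compute $\emptyset'$, such a $G$ witnesses that $I$ is not a subsequence set for $1$-genericity. This is the same high-level strategy as Theorem~\ref{thm:three}, where a $1$-generic $A$ was built with $A\cup X\geq_T\emptyset'$, and it is again an instance of the Jockusch--Shore coding technique. The non-computability of $I$ should play here the role that immunity of $X$ played there: it is the ingredient that lets the subsequence $G_I$ sit at a strictly higher Turing degree than $G$ itself. Some use of non-computability is unavoidable, since a computable $I$ \emph{is} a subsequence set, so no such $G$ can exist when $I$ is computable.

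First I would extract from non-hyperimmunity a computable strictly increasing sequence $c_0<c_1<\cdots$ such that each window $[c_n,c_{n+1})$ contains at least one element of $I$ (equivalently, a computable function majorising the principal function $P_I$). These windows do double duty. On the one hand, every window contributes at least one bit to $G_I$, so anything written on a window survives into the subsequence. On the other hand, they furnish a \emph{computable} scaffold along which the decoder can parse $G_I$. The key elementary observation I would rely on is that if $G$ is made constant with value $b$ across a window $[c_n,c_{n+1})$ and value $\overline{b}$ on the $I$-samples immediately around it, then $G_I$ exhibits a run of $b$'s whose length is exactly $\lvert I\cap[c_n,c_{n+1})\rvert$; thus the decoder can read off, as a run-length, genuine information about $I$ even though it never sees $I$ or the windows directly.

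Next I would construct $G$ by finite extensions $\sigma_0\prec\sigma_1\prec\cdots$ using $\emptyset'\oplus I$ as an oracle and aligning the extensions to the window boundaries. At the genericity stages I would meet the usual requirement for $W_e$: extend the current $\sigma$ into $W_e$ if any extension lies in $W_e$, and otherwise keep $\sigma$ and thereby avoid $W_e$; this guarantees $G$ is $1$-generic. At the coding stages I would record the bit $s\in\emptyset'$, using the $I$-oracle to locate an element of $I$ inside the relevant window and using the run-length device above to commit only finitely much, in Posner--Robinson fashion. I emphasise that one cannot simply legislate $G\upharpoonright I=\emptyset'$ and be done: fixing $G$ on all of $I$ leaves only $\overline{I}$ free and need not yield a genuinely $1$-generic set, so the coding must be sparse enough that the genericity stages retain real freedom. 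As in Theorem~\ref{thm:three}, the verification would then split into two claims: that $G$ is $1$-generic, and that there is a single functional $\Phi$ with $\Phi^{G_I}=\emptyset'$, the latter proved by exhibiting the algorithm that reconstructs $\sigma_0\prec\sigma_1\prec\cdots$ from $G_I$, window by window, exactly as the decoder in Theorem~\ref{thm:three} reconstructs its stages from $A\cup X$.

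The hard part will be precisely this coding/decoding, because the decoder has access only to $G_I$ (the values of $G$ at the $I$-positions, reindexed) and knows neither $I$ nor the correspondence between the windows and the indices of $G_I$. The code must therefore be self-delimiting and robust to the unknown — but guaranteed nonzero — number of $I$-elements in each window, so that from $G_I$ alone one can recover both enough navigation data about $I$ to locate the coding sites and the coded bits themselves; and all of this must be arranged without eroding the freedom the genericity stages require. Making the two families of requirements coexist on the computable window scaffold, and proving that the run-length segmentation of $G_I$ is genuinely well defined, is where the non-computability of $I$ must be invoked with care and is the main obstacle.
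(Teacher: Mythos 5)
Your framework coincides with the paper's: build a $1$-generic $A$ with $A_I\geq_T\emptyset'$, using a computable strictly increasing $f$ such that every interval $(f(n),f(n+1))$ meets $I$, write constant blocks of $A$ along these intervals so that the pattern survives into $A_I$, and verify two claims as in Theorem~\ref{thm:three}. But the proposal stops exactly where the proof begins: the part you defer as ``the main obstacle'' is not an engineering detail, it hides a circularity that your plan as stated cannot break. The decoder must learn from $G_I$ alone whether a given stage met or avoided $W_s$. Any announcement of that decision is written into $G$ itself and therefore changes which strings $G$ extends: if you ask whether some extension of $\sigma$ followed by a ``met'' marker lies in $W_s$ and the answer is no, you are forced to have $G$ extend $\sigma$ followed by the ``avoid'' marker, and nothing guarantees that \emph{this} string has no extension in $W_s$, so genericity fails; if instead you ask about $\sigma$ itself and extend into an arbitrary witness $\tau\in W_s$, the announcement can only come after $\tau$, which the decoder cannot locate without already knowing which case occurred. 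The paper breaks this circle with a device absent from your sketch: at each stage it searches for an index $n$ at which the answer to the $\Sigma^0_1$ question (membership of a fixed pattern-extension in the c.e.\ set $V$) \emph{coincides} with the bit that the unknown position of $P_I(n)$ forces into $A_I(n)$, and then extends so that $A_I(n)$ itself is the announcement. Non-computability enters solely to guarantee that this disagreement search halts (if it never halted, one could compute $I$ or the block-counting function $h$), which is the precise analogue of how immunity of $X$ makes the search halt in Theorem~\ref{thm:three}; it is not, as you suggest, a vague ingredient that ``lets $G_I$ sit at a higher degree.''

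Second, your run-length idea presumes the decoder can recover $h(n)=|I\cap(f(n),f(n+1))|$ and use it to parse $G_I$, but this cannot be made uniform, and this is exactly why the paper splits Lemma~\ref{lem:main} into two cases (Definition~\ref{defn:main}). In the tame case (Theorem~\ref{thm:sub}) some such $h$ is computable, the decoder parses $A_I$ block-by-block using $h$, and the halting of the search is charged to the non-computability of $I$ itself; in the wild case (Theorem~\ref{thm:two}) $h$ is non-computable, so the decoder can never know how many bits a block contributes, and the coding must instead use \emph{alternations} of $[0]$- and $[1]$-blocks (whose number, not length, survives into $A_I$), with the halting of the search charged to the non-computability of $h$. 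Without this case distinction you have no argument that any search or coding scheme terminates, and without the coincidence trick you have no decoder. So your proposal correctly reproduces the setup and the target of the paper's argument, but both ideas that constitute the actual proof are missing.
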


We will prove the lemma in two cases, for which we need two definitions. 
\begin{defn}\label{defn:main}
A non-computable set $I\subseteq \omega $ is \emph {wild} if there exists a computable, strictly increasing \emph{f} such that for every $ n , I\cap (f(n),f(n+1)) \neq \emptyset$ and $h$ given by $h(n):= |I\cap (f(n),f(n+1))|$ is non-computable.\\A non-computable set $I\subseteq \omega $ is \emph {tame} if there exists a computable, strictly increasing \emph{f} such that for every $n, I\cap (f(n),f(n+1)) \neq \emptyset$ and \emph{h} given by $h(n):= |I\cap (f(n),f(n+1))|$ is computable.
\end{defn}
 
From the definition of hyperimmune (See Definition 2.1 on page 80 in \cite{Soare:1987:RES:22895}), it is easy to see that non-hyperimmune sets are either tame or wild. Also note that a set can be both tame and wild at the same time. We prove Lemma ~\ref{lem:main} separately for tame and wild sets.
\subsection{The proof of Lemma ~\ref{lem:main} for wild sets}

\begin{thm}\label{thm:two} There is no wild subsequence set for $1$-genericity.
\begin{proof}

Let $I$ be wild and let \emph{f} and $h$ be as in Definition ~\ref{defn:main}.
We first introduce some notation: For all $\sigma, \tau \in 2^{<\omega}$ and $ n\in \omega$, $\sigma \tau^{n}$ will denote $\sigma$ followed by \emph{n} copies of $\tau$. For every $\tau \in 2^{<\omega}$ and every $ m,p \in \omega$, if $|\tau|= f(m)$, $i_1,i_2,\dots,i_p \in \{0,1\}$, and $n_1,n_2,\dots,n_p \in \omega $, then the string \[\sigma= \tau i_{1}^{f(m+n_1)-f(m)}i_{2}^{f(m+n_2+n_1)-f(m+n_1)} \cdots i_{p}^{f(m+n_p+ \cdots +n_1)- f(m+n_{p-1}+ \cdots +n_1)}\] will be denoted by $\tau [i_1]^{n_1}[i_2]^{n_2} \cdots [i_p]^{n_p}$. If $f(m+n_j+n_{j-1}+ \cdots +n_1)<x<f(m+n_{j+1}+n_j+ \cdots +n_1)$, for some $0<j<p$, we say that ``$x$ is in $[i_{j+1}]$ in $\sigma$'', or just ``$x$ is in $[i_{j+1}]$'', when $\sigma$ is clear from context. For any $n$, the interval $[f(n-1),f(n))$ will be called a ``block''. For $i= 1$ or $0$, $[i]$ may be thought of as a ``block'' of $f(t+1)-f(t)$ many $i$s, occupying the bits $f(t), f(t)+1, \dots f(t+1)$ for some \emph{t}. I.e., $[i]$ will denote $i^{f(t+1)-f(t)}$ for a \emph{t} depending on the location of $[i]$ in the string. For example, if $|\sigma|=f(m)$, then in the string $\sigma [1][0]= \sigma 1^{f(m+1)-f(m)} 0^{f(m+2)-f(m+1)} $, the $[1]$ denotes the $1^{f(m+1)-f(m)}$ ``part'' and the $ [0]$ denotes the $ 0^{f(m+2)-f(m+1)}$ ``part''.\medskip

We build a $1$-generic $A$ by initial segments such that $A_I$ is not $1$-generic. Let $\{W_s\}_{s \in \omega}$ be a list of all c.e.\ sets of strings that are closed under taking extensions. Note that to make $A$ $1$-generic, it is enough to meet or avoid each such set. \medskip

\emph{Construction}:
Suppose that at stage $s+1$ we have built $\sigma_s \in 2^{<\omega}$ and \emph{m} is such that $|\sigma_s|=f(m)$. We build $\sigma_{s+1}$ as follows.
Consider the c.e.\ set:
\[V= \{n: \lfloor (n-1)/2\rfloor>f(m),\; \exists x\;\exists \tau \succeq \sigma_{s}([0][1])^{n}[1]^{f(m+2n+1)}[0],\;|\tau|=f(x),\;\tau \in W_{s}\}.\]

Search for an \emph{n} with $ n-1 > f(m)$ for which 1 or 2 holds;\\
 1. \{$2n+1 \in V$ and $P_{I}(n)$ is in a [0]\} or \{$2n+1 \notin V$ and $P_{I}(n)$ is in a [1]\}
\\2. \{$2n \in V$ and $P_{I}(n)$ is in a [1]\} or \{$2n \notin V$ and $P_{I}(n)$ is in a [0]\}.
\begin{remark}\label{rem:oneee}
In the above, ``$P_{I}(n)$ is in a [0] (or in a [1]) '' means that $P_{I}(n)$ is in a [0] (or in a [1]) in $ \sigma_{s}([0][1])^{n}[1]^{f(m+2n+1)}[0]$. Note that $n-1>f(m)$  implies that $ \lfloor (2n-1)/2 \rfloor,\lfloor (2n+1-1)/2 \rfloor > f(m) $ and that $ |\sigma_{s}|=f(m)< n-1 < n \leq P_{I}(n)<P_{I}(2n)<|\sigma_{s}([0][1])^{n}|$. The last inequality holds since at least one element of $I$ occurs in each ``block'' in $([0][1])^{n}$. So, for the \emph{n}s we search over, $P_I(n)$ is in a $[1]$ or in a $[0]$ in the $\sigma_{s}([0][1])^{n}$ ``part'' of $\sigma_{s}([0][1])^{n}[1]^{f(m+2n+1)}[0]$. \end{remark}
If 2 holds and $2n\notin V$, let $\rho = \sigma_{s}([0][1])^{2n}[1]^{f(m+4n+1)}[0]$. If $2n\in V$, take the first $\tau, x$ (i.e., the first to appear in a canonical enumeration of $W_s$) witnessing this. Let
\begin{align}
\label{eq:1}
    \rho=\tau [1]^{f(x)}[0].
\end{align}
So, in both cases, there is a $y$ such that $|\rho|=f(y)$. If 1 holds, find $\rho$ similarly.\\ Let $\sigma_{s+1}=\rho[1][\emptyset'(s)]^{f(f(y)+1)}[1]$. This ends the construction.\\
\medskip
\emph{Verification}:
We need the following lemmas.
\begin{lem}
The search in the construction at stage \emph{s} halts.
\end{lem}  
\begin{proof}
Suppose not. Then, there is a $N= f(m)+1$ such that for all $ n>N$, $2n+1 \in V \iff P_{I}(n)$ is in a $ [1]$ and $2n\in V \iff P_{I}(n)$ is in a $ [0]$. Since for all $n, P_{I}(n)$ is in a $[1]$ or in a $[0]$ but not both, we have that for almost every $ n,\; 2n\in V$ or $2n+1\in V$ but not both. Note that $V$ is c.e. So, for almost every \emph{n}, by waiting for $2n$ or $2n+1$ to appear in $V$, we can compute if $ P_{I}(n) $ is in a [0] or in a [1]. $P_{I}$ is strictly increasing and for all $n , I\cap (f(n),f(n+1)) \neq \emptyset$. So, $h$ can be computed by successively checking in increasing order (i.e., for $N+1,N+2, \dots n,n+1,\dots$) if $ P_{I}(n)$ is in a $ [0]$ or in a $[1]$. It is important for $P_I(n)$ to be in a $[1]$ or in a $[0]$ in the $([0][1])^{n}$ ``part'' of $\sigma_{s}([0][1])^{n}[1]^{f(m+2n+1)}[0]$ and that it is not in the $\sigma_s$ or the $[1]^{f(m+2n+1)}[0]$ parts. Remark ~\ref{rem:oneee} guarantees this.
\end{proof}
\begin{lem} The set $A_I$ computes $\emptyset^{'}$ (and hence it is not $1$-generic).\end{lem}
\begin{proof}
First, we introduce some notation. For $\tau \in 2^{<\omega}$, define $\tau_I \in 2^{<\omega}$ by $\tau_I(n)= \tau (P_{I}(n))$ for \emph{n} with $P_{I}(n)\leq |\tau|$. So, note that $|\tau_I|= \max\{n: P_I(n) \leq |\tau|\}$. For $i=0$ or $1$, let $\{i\}^{n}$ denote $i^{m}$ for some $m\geq n$. Let $\{i\}^{1}=\{i\}$. Since for all $n, I\cap (f(n),f(n+1)) \neq \emptyset$, we have that for every $ \sigma \in 2^{<\omega}$, $ (\sigma[i])_{I}=\sigma_I \{i\} $. \\ To prove the lemma, we show that $A_I$ finds $\sigma_{s+1}$ and $(\sigma_{s+1})_I$ from $\sigma_{s}$ and $(\sigma_{s})_I$. The idea is: as $I$ has at least one element in each ``block'', the number of alternations of [0] and [1] in $A$ is coded as the number of alternations of $\{0\}$ and $\{1\}$ in $A_I$. So, $A_I$ can run the following procedure. Find \emph{m} such that $|\sigma_{s}|=f(m)$ and $k$ such that $(\sigma_{s}) _I(\{0\}\{1\})^{k}\{1\}^{f(m+2k+1)}\{0\} \prec A_I$.  
Suppose $k=2n$. Check if $n\in A_I$ or not. If yes, then $2n \in V$ must hold and so compute the least $\tau, x $ witnessing this with $|\tau|=f(x)$. So, the  $\rho=\tau [1]^{f(x)}[0]$ is found. Now, since $\rho$ ends in a $[0]$ and $\rho[1]\prec A$, $\rho_I$ ends in a $\{0\}$ and $\rho_I \{1\} \prec A_I$. So, $A_I$ finds $\rho_I$ by finding $\delta$ such that \[(\sigma_{s})_ {I}(\{0\}\{1\})^{k}\{1\}^{f(m+2k+1)}\{0\} \; \prec \;\delta \{1\}^{f(x)}\{0\}\{1\}\; \prec A_I.\]

 It must be that $\rho_I = \delta \{1\}^{f(x)}\{0\}\;$ and $\delta = \tau_{I}$. 
 
 (The details are: $A_I$ finds $\delta$ as follows: First, find $$M= |(\sigma_{s})_ I(\{0\}\{1\})^{k}\{1\}^{f(m+2k+1)}\{0\}|$$ such that $(\sigma_{s})_ I(\{0\}\{1\})^{k}\{1\}^{f(m+2k+1)}\{0\}1 \prec A_I$. Then, find the first place in $A_I\upharpoonright[M, \infty)$ where $\{1\}^{f(x)}$ occurs. As $\{1\}^{f(x)}$ cannot occur in $\tau_ I$ (since $|\tau|=f(x)$), $\delta = \tau_{I}$ is found correctly. Although $\tau_I \prec A_I$, $A_I$ cannot compute $\tau_ I$ without a signal saying ``$\tau_I$ ends here''. The $[1]^{f(x)}[0]$ in equation \ref{eq:1}, which cannot possibly occur within $\tau$, signals the end of $\tau$.)

Now, let $n \notin A_I$. So, $2n \notin V$ must be true. Then, $\rho$ and $f(y)=|\rho|$ can be found computably. We have that $\rho_I=(\sigma_{s})_ {I}(\{0\}\{1\})^{k}\{1\}^{f(m+2k+1)}\{0\}$. So, $A_I$ can find $\rho_I$, as $\{1\}^{f(m+2k+1)}$ cannot occur in $(\sigma_{s})_ {I}(\{0\}\{1\})^{k} \prec A_I$ and since $\rho_I$ ends in a \{0\} and $\rho_I\{1\}\;\prec A_I$. So, in both cases, $\rho$ and $y$ such that$ |\rho|=f(y)$ and $\rho_I$ are found. If $k$ is odd, proceed similarly.

The set $A_I$ can now find $i$ such that $\rho_I \{1\}\{i\}^{f(f(y)+1)}\{1\}\{0\} \prec A_I$ since $\{i\}^{f(f(y)+1)}$ cannot occur in $\rho_I$. The string $ \sigma_{s+1} $ can now be found computably and it only remains to show that $(\sigma_{s+1})_I$ can be found. So far, $A_I$ has computed $\lambda = \rho[1][i]^{f(f(y)+1)}[1]$. By construction, $\sigma_{s+1}= \lambda [1]$ and $\lambda[1][0]=\sigma_{s+1}[0] \prec A $ and so, $\lambda_I \{1\}\{0\}=(\sigma_{s+1})_I \{0\} \prec A_I $. So, $A_I$ can find $(\sigma_{s+1})_I$. (In words: since $(\sigma_{s+1})_I$ ends in a $\{1\}$ and $(\sigma_{s+1})_I\{0\}\prec A_I$, $(\sigma_{s+1})_I$ can be found correctly by $A_I$.)
\end{proof}
\begin{lem}
The set A is $1$-generic.
\end{lem}
\begin{proof}
By the previous lemma, the search at each stage of the construction halts. So, $A$ meets or avoids each $W_s$, as by assumption, each $W_s$ is closed under taking extensions.
\end{proof}
These lemmas suffice to prove Theorem ~\ref{thm:two}.
\end{proof}

\end{thm}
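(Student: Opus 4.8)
The plan is to build a $1$-generic set $A$ whose $I$-subsequence $A_I$ codes $\emptyset'$, which by Jockusch's result (2) forces $A_I$ to fail $1$-genericity, thereby showing that a wild $I$ cannot be a subsequence set. Since $I$ is wild, I fix the computable increasing $f$ and the non-computable ``block-count'' function $h(n)=|I\cap(f(n),f(n+1))|$ from Definition~\ref{defn:main}. The central design idea is that because each block $(f(n),f(n+1))$ contains at least one element of $I$, a constant block of $[1]$'s in $A$ appears as at least one $\{1\}$ in $A_I$, and a constant block of $[0]$'s appears as at least one $\{0\}$; hence the \emph{pattern of alternations} between $0$-blocks and $1$-blocks in $A$ is faithfully visible in $A_I$, even though the exact lengths (governed by $h$) are not computable. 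This is the mechanism that lets $A_I$ reconstruct the stage-by-stage construction and read off the halting information I plant in it.

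First I would set up notation for strings built as concatenations of uniform blocks $[i]=i^{f(t+1)-f(t)}$, so that I can speak of a bit ``being in a $[0]$'' or ``in a $[1]$.'' Then, at stage $s+1$, with $\sigma_s$ already built and $|\sigma_s|=f(m)$, I would form a c.e.\ set $V$ encoding, for each candidate $n$, whether some extension of a long probe string $\sigma_s([0][1])^n[1]^{\cdots}[0]$ lands in the requirement set $W_s$. The key combinatorial step is a search for an $n>f(m)+1$ at which the truth value ``$2n\in V$'' (or ``$2n+1\in V$'') \emph{disagrees} with whether $P_I(n)$ falls in a $[1]$ or a $[0]$ of the probe string; I use Remark~\ref{rem:oneee} to guarantee that for the relevant $n$, $P_I(n)$ genuinely lands inside the alternating $([0][1])^n$ part rather than in $\sigma_s$ or the tail. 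Depending on which of conditions~1 or~2 fires, I either extend through a witness $\tau\in W_s$ (meeting $W_s$) or extend by a canonical all-clear block (avoiding $W_s$), and in either case I append a trailing block $[1][\emptyset'(s)]^{f(f(y)+1)}[1]$ that records the bit $\emptyset'(s)$.

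The three verification lemmas then do the work. The halting lemma shows the search terminates: if it never did, then for almost all $n$ the equivalences $2n\in V\iff P_I(n)\text{ is in }[1]$ and $2n\in V\iff P_I(n)\text{ is in }[0]$ would hold, and since $V$ is c.e.\ and each $P_I(n)$ is in exactly one of $[0],[1]$, I could wait for $2n$ or $2n+1$ to enter $V$ and thereby \emph{compute} whether $P_I(n)$ is in a $0$-block or a $1$-block — and running this successively would compute $h$, contradicting wildness. The coding lemma shows $A_I\geq_T\emptyset'$: exploiting that $\{i\}^{f(\cdot)}$-length runs in $A_I$ are too long to occur inside the image $\tau_I$ of any genuine witness, $A_I$ can locate the boundaries $\rho_I$, $\delta=\tau_I$, recover $n$, decide $n\in A_I$ to determine whether $2n\in V$, and finally read the planted bit $\emptyset'(s)$ by checking the recorded trailing block. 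The genericity lemma is then immediate, since the search halting at every stage means $A$ meets or avoids each $W_s$ (these being closed under extension).

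I expect the main obstacle to be the coding lemma, specifically making $A_I$ detect block boundaries without external markers. The subtlety is that although $\tau_I\prec A_I$, the machine $A_I$ has no signal telling it where $\tau_I$ ends; the resolution, which I would emphasize, is the deliberately inserted separator $[1]^{f(x)}[0]$ of equation~\eqref{eq:1}, whose image run $\{1\}^{f(x)}$ is provably longer than anything that can occur inside $\tau_I$ (because $|\tau|=f(x)$), so scanning $A_I$ for the first such long run pins down $\delta=\tau_I$ correctly. The parallel device with the $\{1\}^{f(m+2k+1)}$ and $\{i\}^{f(f(y)+1)}$ runs handles the other boundaries and the reading of $\emptyset'(s)$; getting these length inequalities to line up so that each run is unambiguously locatable is the delicate bookkeeping at the heart of the argument.
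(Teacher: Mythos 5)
Your proposal is correct and takes essentially the same route as the paper's proof: the same disagreement search between the c.e.\ set $V$ and the location of $P_I(n)$, the same halting argument (non-termination would let one compute $h$, contradicting wildness), and the same decoding of the planted bits $\emptyset'(s)$ in $A_I$ via separator runs too long to occur inside $\tau_I$. The only slip is a typo in your halting lemma, where the first equivalence should read $2n+1\in V\iff P_I(n)$ is in a $[1]$ (not $2n\in V$), exactly as the negations of conditions 1 and 2 dictate.
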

 
\subsection{The proof of Lemma ~\ref{lem:main} for tame sets}

\begin{thm}\label{thm:sub} There is no tame subsequence set for $1$-genericity.
\begin{proof}

 Let $I$ be tame and let \emph{f} and \emph{h} be as in Definition ~\ref{defn:main}. Let, without loss of generality, $h>1$. Except for the usage of $\{\}$, the proof of Theorem ~\ref{thm:sub} uses the same notation as in the proof of Theorem ~\ref{thm:two}. If for some \emph{m} and $(\tau_{i})_{i\leq m} \subseteq 2^{<\omega}$, $\rho= \sigma \tau_{1}\tau_{2}\dots \tau_{m}$, then $\rho_I$ is denoted by $\sigma_I \{\tau_{1}\}\{\tau_{2}\} \dots \{\tau_{m}\}$. Call $\{\sigma\}$ `homogeneous' ($H$) if it has only $0s$ in it. For better readability, $\{[i]\}$ will be denoted by just $\{i\}$. The interval $[f(n-1),f(n))$ will be called the $n^{th}$ block.

\emph{Construction}: We build $A$, a $1$-generic, using an initial segments construction. At stage $= s+1$, let $\sigma_{s}$ have been built with $|\sigma_{s}|=f(m)$ for some \emph{m}. A string $\sigma$ is said to be \emph{suitable} for \emph{n} (at stage \emph{s}) if it has exactly one $1$ in it and $\sigma$ occupies the $(m+n+2)^{nd}$ block in $\sigma_{s}[0]^{n}[1]\sigma$. I.e., $|\sigma|=f(m+n+2)-f(m+n+1)$. Build $\sigma_{s+1}$ as follows:\\ Consider the c.e.\ set:
\[V= \{(n,\sigma) : \exists x\; \exists \tau \succeq \sigma_{s}[0]^{n}[1]\sigma [0]^{q}[1];\;\sigma(q)=1;\; \sigma \mbox{ is suitable for n}; \;\tau \in W_{s}; \;|\tau|=f(x)\}.\]

Check if $\exists n, \sigma$ with $\sigma$ suitable for \emph{n} such that 1 or 2 holds.\\
1. $(n,\sigma) \in V$ and the $\{\sigma\} $ in $(\sigma_{s})_I \{0\} ^{n}\{1\}\{\sigma\} \{0\} ^{q}$ is not $ H$.\\ 
2. $(n,\sigma) \notin V$ and the $\{\sigma\} $ in $(\sigma_{s})_I\{0\} ^{n}\{1\} \{\sigma\} \{0\} ^{q}$ is $ H$.\\
If $(n,\sigma) \in V$, take the least $\tau$ witnessing this. So, $|\tau|=f(x)$ for some $x$. Let $\rho = \tau$. If $(n,\sigma) \notin V$, let $\rho =\sigma_{s}[0]^{n}[1]\sigma [0]^{q}[1]$. Let $\sigma_{s+1}=\rho[\emptyset^{'}(s)]$. This completes the construction.
\\

\emph{Verification}:
To prove Theorem ~\ref{thm:sub}, it suffices to prove the following three lemmas. 

\begin{lem}

 For all stages \emph{s}, there exist $\sigma$ and n, with $\sigma$ suitable for n such that 1 or 2 holds.
 \end{lem}
 \begin{proof} Towards a contradiction, suppose that the lemma fails at stage \emph{s}. In what follows, ``suitable for $(.)$'' means ``suitable for $(.)$ at \emph{s}''. Then, for every \emph{n} and $ \sigma$ such that $\sigma$ is suitable for \emph{n}, we have
 
 \begin{align}
 \label{eq:2}
     (n,\sigma) \in V \iff \mbox{the }\; \{\sigma\} \mbox{ in  } (\sigma_{s})_I\{0\} ^{n}\{1\} \{\sigma\} \mbox{ is}\;H.
 \end{align}
Let $ f(m)=|\sigma_{s}|$. We show that for all $ x>m+2$, $I\cap (f(x-1),f(x)]$ can be computed, a contradiction. Fix $ x>m+2$ and let $p= x- (m+2)$. So, note that if $\sigma$ is suitable for \emph{p}, then $|\sigma|= f(x)-f(x-1)$. As $h>1$, by the definition of being ``suitable for \emph{p}'', we have that for every $ \sigma $ suitable for \emph{p}, if $ z \in (f(x-1),f(x)]$ and $\sigma(z-f(x-1))=1 $, then
\begin{align}
\label{eq:3}
   \{\sigma\} \mbox{ in } (\sigma_{s})_I\{0\} ^{p}\{1\} \{\sigma\}  \mbox{ is $H$} \iff I(z)=0. 
\end{align}
Also, by the definitions of $h$ and of being  ``suitable for \emph{p}'' , there are $f(x)-f(x-1)-h(x)$ many strings $\sigma$ such that $\sigma$  is  ``suitable for \emph{p}'' and such that $\exists z \in (f(x-1),f(x)]$ with $\sigma(z-f(x-1))=1$ and $I(z)=0$. So, by \eqref{eq:3}, $f(x)-f(x-1)-h(x)$ many strings $\sigma$ are such that the $\{\sigma\} $ in $(\sigma_{s})_I\{0\} ^{p}\{1\} \{\sigma\} $ is $H$. So, by \eqref{eq:2}, if $V_p:=\{\sigma:(p,\sigma) \in V\}$, then $|V_p|= f(x)-f(x-1)-h(x)$. By \eqref{eq:2} and \eqref{eq:3}, $I\cap (f(x-1),f(x)]$ can be found from $V_{p}$. To find $V_p$, enumerate it until $f(x)-f(x-1)-h(x)$ many elements have appeared. Now, compute $I\cap (f(x-1),f(x)]$. Note that $x$ was arbitrary and that $V_p$ can be enumerated uniformly in $p=x-(m+2)$. This shows that $I$ is computable. 
 \end{proof}

\begin{lem}
The set $A$ is $1$-generic.
\end{lem}
\begin{proof}

By the previous lemma, the search halts at every stage. Also, without loss of generality, we may assume that for every $ s,\; W_s$ is closed under taking extensions. So, it does not matter that in the definition of $V$, we only search for strings $\tau$ such that $|\tau|=f(x)$ for some $x$.\end{proof} 
\begin{lem}
The set $A_I$ computes $\emptyset^{'}$ (and hence is not $1$-generic).
\end{lem}
\begin{proof}

Note: for every $ \sigma, \tau\in 2^{<\omega}$ and for every $a\in \omega$, if $|\tau|= f(a+1)-f(a)$ and $ |\sigma|= f(a)$, then the $\{\tau\}$ ``part'' of $\sigma_I\{\tau\}$ has length $|\{\tau\}|= h(a)$. In what follows, the strings $\rho$ and $\sigma$ refer to those appearing in the construction at stage $s+1$. We show that $A_I$ computes $\sigma_{s+1}$ and
$(\sigma_{s+1})_I$ given $\sigma_{s }$ and $(\sigma_{s})_I$ as follows. First, $A_I$ finds \emph{m} such that $|\sigma_{s }|=f(m)$. By the note and since \emph{h} is computable, using \emph{m}, $A_I$ finds \emph{n} and $\delta \in 2^{<\omega}$ such that $(\sigma_{s})_I\{0\}^{n}\{1\}\delta \prec A_I $
and $|\delta|= h(m+n+2)$. Then it must be that $\delta= \{\sigma\}$. Now, check if $\{\sigma\}$ is $ H$. If yes, then $\rho$ and $\rho_I $ are found computably in $A_I$ and \emph{h}. If no, then $A_I$ first finds the $\sigma$ using $(\sigma_{s+1})_I\{0\}^{n}\{1\}\{\sigma\} \{0\}^{q}\{1\} \prec A_I$. Having found the $\sigma$, $A_I$ can find the least $\tau$ and $x$ such that $|\tau|=f(x)$ as in the construction. Let $ M= \sum_{k\leq x} h(k)$. Then $A_I\upharpoonright M = \tau_I =\rho_I$. Now, $\sigma_{s+1}$ and $(\sigma_{s+1})_I$ are found computably in $A_I$.
\end{proof}

\end{proof}

\end{thm}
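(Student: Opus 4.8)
The plan is to build a $1$-generic set $A$ by finite initial segments $\sigma_0 \prec \sigma_1 \prec \cdots$ while simultaneously arranging that $A_I$ computes $\emptyset'$. Since a $1$-generic set cannot compute $\emptyset'$ (result (2) of Jockusch), $A_I$ will fail to be $1$-generic, so $I$ is not a subsequence set for $1$-genericity. I would maintain the invariant that $|\sigma_s| = f(m)$ for some $m$, so every initial segment ends on a block boundary of the computable function $f$ supplied by tameness. At stage $s+1$ the construction has two jobs: force a decision about the $s$-th extension-closed c.e.\ set $W_s$ (so that $A$ meets or avoids it), and plant the bit $\emptyset'(s)$ in a position that $A_I$ can later read off; I would append this bit as a final $[\emptyset'(s)]$.

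The coding device I would use is a string \emph{suitable for $n$}: a block-length string containing exactly one $1$, placed after the run $\sigma_s[0]^n[1]$. Because $I$ is tame, $A_I$ can compute from $h$ precisely how many bits of each block survive into the subsequence, and hence can locate the segment $\{\sigma\}$ of $A_I$ corresponding to such a $\sigma$. The single $1$ of $\sigma$ appears in $\{\sigma\}$ exactly when it lands on an element of $I$, so the statement ``$\{\sigma\}$ is homogeneous'' (all zeros, i.e.\ $H$) records a genuine but non-computable fact about $I$. I would let the c.e.\ set $V$ collect the pairs $(n,\sigma)$ for which some extension of $\sigma_s[0]^n[1]\sigma[0]^q[1]$ enters $W_s$, and then search for a suitable $\sigma$ on which the two predicates ``$(n,\sigma)\in V$'' and ``$\{\sigma\}$ is $H$'' \emph{disagree} (the two cases of the construction). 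On success I extend through the witnessing $\tau\in W_s$ when $(n,\sigma)\in V$, and through $\sigma_s[0]^n[1]\sigma[0]^q[1]$ otherwise.

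The verification then splits into three lemmas, carried out in this order. First, the search halts at every stage: if it failed, then $(n,\sigma)\in V \iff \{\sigma\}$ is $H$ for every suitable $\sigma$, and since exactly $f(x)-f(x-1)-h(x)$ of the suitable strings have their single $1$ off $I$, I could enumerate $V_p$ only up to that \emph{known} threshold and thereby compute $I$ block by block, contradicting non-computability of $I$. Second, $1$-genericity of $A$ follows immediately once the search always halts, because each $W_s$ is extension-closed, so $A$ meets or avoids it. Third, $A_I$ computes $\emptyset'$: the decoder recovers $m$ from $|\sigma_s|$, uses $h$ to find $n$ and the block-segment $\delta=\{\sigma\}$, tests whether $\delta$ is $H$ to learn whether $(n,\sigma)\in V$, reconstructs $\tau$ (hence $\rho$ and $\rho_I$) in either case, and finally reads the planted bit to output $\emptyset'(s)$ — all uniformly, so that $\sigma_{s+1}$ and $(\sigma_{s+1})_I$ are produced from $\sigma_s$ and $(\sigma_s)_I$.

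The step I expect to be the real obstacle is the first lemma, the halting of the search. Everything else is bookkeeping about the subsequence operator, but the halting argument is where the tameness hypothesis (computability of $h$) does essential work: I must convert a hypothetical failure of the search into an actual algorithm computing $I$, and the delicate point is that the count of suitable strings placing their $1$ off $I$ is \emph{exactly} $f(x)-f(x-1)-h(x)$, so that enumerating $V_p$ to that many elements is guaranteed both to terminate and to identify $I\cap(f(x-1),f(x)]$ correctly.
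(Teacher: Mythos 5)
Your proposal is correct and follows essentially the same route as the paper's own proof: the same construction with suitable strings, the c.e.\ set $V$, the search for disagreement between ``$(n,\sigma)\in V$'' and ``$\{\sigma\}$ is $H$'', the planted $[\emptyset'(s)]$ bit, and the same three verification lemmas, including the exact counting threshold $f(x)-f(x-1)-h(x)$ that makes the halting lemma work. You have also correctly identified that lemma as the place where tameness (computability of $h$) does the essential work.
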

\begin{remark}
The proof of Lemma ~\ref{lem:main} shows that for any non-hyperimmune and non-computable set $I$, there is a 1-generic set $A$ such that $A_I$ computes $\emptyset'$. In particular, this implies the existence of a 1-generic set $A$ and a set $I$ such that $A_I$ computes $\emptyset'$.
\end{remark}

\section{A non-computable subsequence set for weak-1-genericity}

The proofs in this section are largely based on Section 3 of \cite{day2013indifferent}.
\begin{thm}\label{thm:subb}
There is a non-computable subsequence set for weak-$1$-genericity.
\end{thm}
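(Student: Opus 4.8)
The plan is to construct a non-computable set $I$ by finite initial segments, diagonalising against the computable functions to guarantee non-computability, while simultaneously ensuring that for \emph{every} weak-$1$-generic $G$, the subsequence $G_I$ remains weak-$1$-generic. Recall that $G$ is weak-$1$-generic iff $G$ meets every dense c.e.\ set of strings. So the task is to show: if $G$ meets every dense c.e.\ set, then so does $G_I$. First I would fix an enumeration $\{D_e\}_{e\in\omega}$ of all c.e.\ sets of strings and, for those $D_e$ that happen to be dense, I would want to pull back density along the map $G \mapsto G_I$. The point is that $I$ need not interfere with density: because $I$ is infinite, any string $\eta$ in the $G_I$-world is the image $\tau_I$ of some string $\tau$ in the $G$-world, and I would arrange that the preimage of a dense set remains dense. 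Following Section~3 of \cite{day2013indifferent}, the right framework is to build $I$ so that the positions outside $I$ (the deleted bits) are spaced out enough, and chosen non-computably, to diagonalise, yet sparse enough that they never destroy the ability of $G_I$ to meet a given dense requirement.

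The key construction steps, in order, would be: (1) set up requirements $N_e$ (for non-computability: ensure $I \neq \{x : \varphi_e(x)\!\downarrow = 1\}$ for each total $\{0,1\}$-valued $\varphi_e$) and $R_e$ (for each $e$, if $D_e$ is dense then for all weak-$1$-generic $G$, $G_I$ meets $D_e$); (2) build $I$ by initial segments, at each stage committing finitely much of $I$ and reserving the rest; (3) to satisfy $N_e$, wait for a fresh witness $x$ beyond the committed part on which $\varphi_e$ converges, and set $I(x)$ to disagree — since $\varphi_e$ ranges over computable sets this forces $I$ non-computable; (4) to satisfy the $R_e$, observe that the subsequence operation $\tau \mapsto \tau_I$ only depends on which positions lie in $I$, so that given a long enough initial segment of $I$, every string $\eta$ can be realised as $\tau_I$ for some $\tau$ extending any prescribed $G$-initial segment, allowing the density of $D_e$ to be transferred to the $G$-side uniformly. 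The crucial observation making this work for \emph{weak}-$1$-genericity (and not $1$-genericity, by Lemma~\ref{lem:main}) is that density is a positive, ``meeting'' requirement: we only need $G_I$ to enter $D_e$, never to avoid a set, so the deleted bits of $I$ cannot be used to \emph{code} information into $G_I$ the way they were exploited in the proofs of Theorems~\ref{thm:two} and~\ref{thm:sub}.

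The main obstacle I expect is reconciling the two kinds of requirements: the diagonalisation for $N_e$ forces $I$ to omit (or include) certain positions unpredictably, and I must verify that these choices never obstruct some $R_e$. Concretely, when I commit a bit of $I$ to meet $N_e$, I must check that, for every dense c.e.\ $D_e$ and every weak-$1$-generic $G$, there is still freedom further along the sequence for $G_I$ to extend into $D_e$. Because density gives a suitable string in $D_e$ above \emph{any} given stem, and because $I$ remains infinite regardless of the finitely many diagonalisation commitments, this freedom is never lost; the verification amounts to showing that the pullback along $\tau \mapsto \tau_I$ of a dense set is dense on the cone above each committed stem. I would present this as a finite-injury-free construction — the requirements do not actually conflict once one sees that $N_e$ only constrains finitely many bits while $R_e$ is about cofinally many positions — so the priority bookkeeping is light and the heart of the argument is the density-transfer lemma.
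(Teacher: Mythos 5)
Your proposal has a genuine gap, and it sits exactly at the step you call the ``density-transfer lemma.'' You propose to satisfy $R_e$ by pulling back the dense c.e.\ set $D_e$ along $\tau \mapsto \tau_I$ and noting that the pullback $D_e^* = \{\tau : \tau_I \text{ meets } D_e\}$ is dense above every stem. That density claim is true for any infinite $I$, but it is not what you need: a weak-$1$-generic $G$ is only guaranteed to meet dense \emph{c.e.}\ sets of strings, and $D_e^*$ is not c.e.\ --- to enumerate it you must compute $\tau_I$ from $\tau$, which requires knowing $I\upharpoonright|\tau|$, and your $I$ is being built non-computable precisely by diagonalization. So $D_e^*$ is only c.e.\ relative to $I$, and nothing forces $G$ to meet it. Concretely, the danger your construction never addresses is that $G$ might meet $D_e$ only at witnesses whose bits get partially deleted by $\overline{I}$, corrupting the witness in $G_I$; ``$I$ remains infinite'' does not prevent this, since a non-computable $I$ must have infinite complement, hence infinitely many deletions at non-computably placed positions.

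The paper's proof is designed entirely around closing this gap, and its mechanism is absent from your proposal. For each dense c.e.\ $S_i$ it builds \emph{computable} functions $t_i, l_i$ where $t_i(n)$ is a ``universal extension string'': $(X\upharpoonright m)t_i(n)$ meets $S_i$ for every $m \le l_i(n)$ (Definition~\ref{defn}, condition (II)). It then forms auxiliary sets $T_{i,n}$ of strings carrying a \emph{pair} of consecutive blocks $t_i(2m)t_i(2m+1)$; these are dense and c.e.\ (because $t_i, l_i$ are computable, not because of anything about $I$), hence every weak-$1$-generic meets them. Finally $I$ is chosen so that each double interval $[l_i(2n), l_i(2n+2))$ omits at most one point of $I$ (for almost all $n$): then at least one of the two blocks survives deletion intact, and condition (II) --- with the quantifier over all $m \le l_i(2n)$, needed because $(G\upharpoonright l_i(2n))_I$ is \emph{shorter} than $l_i(2n)$ --- guarantees $G_I$ meets $S_i$. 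Non-computability of $I$ comes from a cardinality argument (continuum many such $I$, countably many computable sets), not from requirement-by-requirement diagonalization. Your intuition that weak-$1$-genericity helps because the requirements are ``positive/meeting'' is correct as motivation, but without the computable universal-extension strings and the paired-block sparsity condition on $\overline{I}$, the proof does not go through.
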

\begin{proof}

 We need two definitions.
\begin{defn}
\label{defn}
If $S$ is a dense set in $2^{<\omega}$, then $l :\omega \rightarrow \omega$ and $t: \omega \rightarrow 2^{<\omega}$ are said to be \emph{``suitable for S''} if $(I)$ and $ (II)$ hold for all $k\in \omega $, with $\tau_k$ as in Lemma 3.1 in ~\cite{day2013indifferent}:
\\
\emph{(I)} \emph{l} and \emph{t} are defined inductively as follows: $l(0)=0,\;\; t(0)=\tau_0$,\\
$l(n+1)= l(n)+ |t(n)|,\;\; t(n+1)= \tau_{l(n+1)}$.\\
\emph{(II)} for every $ X \in 2^{\omega}$ and for every $ n,\;m\leq l(n) \Rightarrow (X\upharpoonright m)t(n)$
 meets $S$.\end{defn}
 
 The meaning of suitable in this section is independent of its meaning in the previous section.

 \begin{defn}
 If $D=(S_i)_{i\in\omega}$ is a collection of dense sets of strings, \emph{$D$ has $*$} if for all $i$ there exist $t_i,l_i$, suitable for $S_i$ such that for every $ n$, we have that $ T_{i,n} \in D$, where\\ $T_{i,n}= \{\sigma \tau \rho:\; \exists m>n,\; |\sigma|=l_{i}(2m),\;\tau=t_{i}(2m),\; \rho = t_{i}(2m+1)\}. $ 
 
 \end{defn}
 
 \begin{lem}\label{lem:onee}
If $D=(S_i)_{i\in\omega}$ has $*$, then there is a non-computable set $I$ such that for every $ X \in \mathcal{A}= \{Y\in 2^{\omega}: \forall i [Y$ meets $S_i]\},\;$we have that $X_I \in \mathcal{A}.$
 \end{lem}
 
 \begin{proof}
 Let $D=(S_i)_{i\in\omega}$ have $*$ as witnessed by $(t_i,l_i)_i$. Let $I$ be a set such that for every $ i$ for almost every $ n$, $|[l_{i}(2n), l_{i}(2n+2)) \cap \overline{I}| \leq 1 $. As there are continuum many sets with this property, but only countably many computable sets, $I$ can be taken to be non-computable. We show that $I$ works. Let $A\in \mathcal{A}$ and fix an arbitrary $i$. We must show that $A_I$ meets $S_i$. Now, since $D$ has $*$, $A$ meets $T_{i,n}$ for all \emph{n}. I.e., for every $ n$, there exists $m>n$ such that $\sigma \tau \rho \prec A$, and $|\sigma|=l_{i}(2m),\;\tau=t_{i}(2m),\; \rho = t_{i}(2m+1)$. So,
 \begin{align}
 \label{eq:5}
    \exists ^{\infty} n\text{ such that} (A\upharpoonright l_{i}(2n))t_{i}(2n)t_{i}(2n+1) \prec A.  
 \end{align}
By choice of $I$, for almost every $ n$, we have that
\begin{align}
\label{eq:6}
    I\upharpoonright [l_{i}(2n),l_{i}(2n+1)) = 1^{l_{i}(2n+1)- l_{i}(2n)}
\end{align}
or
\begin{align}
\label{eq:7}
    I\upharpoonright[l_{i}(2n+1),l_{i}(2n+2)) = 1^{l_{i}(2n+2)- l_{i}(2n+1)}
\end{align}
So, pick an \emph{n} such that \eqref{eq:5} and \eqref{eq:6} or \eqref{eq:5} and \eqref{eq:7} holds for \emph{n}. Assume that the former holds. So, $(A\upharpoonright l_{i}(2n))_I t_{i}(2n) \prec A_I$. As $M= |I\cap [0,l_{i}(2n))| \leq l_{i}(2n) $, we see that $|(A\upharpoonright l_{i}(2n))_I|=M \leq l_{i}(2n)$. So, by Definition \ref{defn}, (II), $(A\upharpoonright l_{i}(2n))_I t_{i}(2n)\prec A_I $ meets $S_i$. Similarly, $A_I$ meets $S_i$ if the latter holds.
 \end{proof}
 
 \begin{lem}
 There exist uniformly computable sequences of partial functions $(t_e)_{e\in \omega}$ and $(l_e)_{e\in \omega}$ such that if $W_e$ is a dense set of strings, then $t_e$ and $l_e$ are total and suitable for $W_e$ for all \emph{e}.
 \end{lem}
 \begin{proof}
 Fix \emph{e} and suppose that $l_{e}(n)$ and $t_{e}(n)$ have been found. Let $l_{e}(n+1)= l_{e}(n) + |t_{e}(n)|$ and list $2^{\leq l_{e}(n+1)}$ as $(\sigma_i)_{i<k}$ for some $k$. Search for $\rho_0 \succ \sigma_0, \rho_0 \in W_e$. If found, search for $ \rho_1 \succ \sigma_1 \rho_0,\;\rho_1 \in W_e$. Repeat this process for all $i < k$. I.e., if $(\rho_j)_{j<n}$ have been found, search for $\rho_n \succ \sigma_n \rho_0 \dots \rho_{n-1},\;\rho_n \in W_e$. Let $t(n+1)=\rho_0 \cdots \rho_{n}$. Clearly, $t_e$ and $l_e$ are uniformly computable in \emph{e} and are total and suitable for $W_e$ if $W_e$ is a dense set of strings.
 \end{proof}

Let $D$ be all dense c.e.\ sets of strings. So, $D= (W_{f(e)})_{e\in \omega}$ for some $f \leq \emptyset^{''}$. Let $S_e = W_{f(e)}$ for all \emph{e}. By the previous lemma and since $S_e$ is dense for all \emph{e}, $l_{f(e)}$ and $t_{f(e)}$ are total and computable for all \emph{e}. So, for all \emph{e} and \emph{n}, we have that $T_{e,n}= \{\sigma \tau \rho:\;\exists m>n,\; |\sigma|=l_{f(e)}(2m),\;\tau=t_{f(e)}(2m),\; \rho = t_{f(e)}(2m+1)\} $ is c.e.\ and dense. So, $T_{e,n} \in D$. Hence, $D$ has $*$.
So, by Lemma ~\ref{lem:onee}, Theorem ~\ref{thm:subb} is proved. 
\end{proof}

\section{Acknowledgement} 
I thank my PhD thesis advisor, Joseph S. Miller for introducing me to the topic dealt with in this paper and also for his constant encouragement, guidance and support.  

%

\bibliographystyle{plain}
\bibliography{references}

\end{document}